\documentclass{amsart}

\newcommand{\M}{\ensuremath{\mathbb{M}}}
\newcommand{\A}[1][n]{\ensuremath{\M_{#1}}}

\newcommand{\F}{\ensuremath{\mathbb{F}}}
\newcommand{\Z}{\ensuremath{\mathbb{Z}}}

\DeclareMathOperator{\tr}{Tr}

\theoremstyle{plain}
\newtheorem{theorem}{Theorem}

\newtheorem{lemma}[theorem]{Lemma}
\newtheorem{proposition}[theorem]{Proposition}
\theoremstyle{definition}

\newtheorem{remark}[theorem]{Remark}

\begin{document}
\title[Matrix multiplication is determined by orthogonality and trace]{Matrix multiplication is determined\\by orthogonality and trace}
\author{Chris Heunen and Clare Horsman}
\address{Department of Computer Science, University of Oxford}
\email{\{chris.heunen,clare.horsman\}@cs.ox.ac.uk}
\begin{abstract}
	Any associative bilinear multiplication on the set of $n$-by-$n$ matrices over some field of characteristic not two, that makes the same vectors orthogonal and has the same trace as ordinary matrix multiplication, must be ordinary matrix multiplication or its opposite.
\end{abstract}
\keywords{Matrix multiplication, orthogonality, trace, zero products}
\subjclass[2010]{15A03,15A04,15A99,15A86}
\maketitle

\section{Introduction}

Matrix multiplication is the fundamental operation of linear algebra, modelling composition of linear maps in terms of coordinates.
This article characterises matrix multiplication simply in terms of orthogonality and trace, in the following way.
Suppose there were another way to combine two matrices $x,y \in \A$ into a new one $x \star y \in \A$,
that resembles composition of linear maps in the sense that it is associative, bilinear, and respects the identity matrix:
\begin{enumerate}
  \item[(A)] ``\emph{Associativity}'': $x \star (y \star z) = (x \star y) \star z$ for all $x,y,z \in \A$;
  \item[(B)] ``\emph{Bilinearity}'': $(\lambda x) \star y = \lambda(x \star y) = x \star (\lambda y)$, $(x+y) \star z = (x \star z) + (y \star z)$, and $x \star (y+z) = (x \star y) + (x \star z)$ for scalars $\lambda$ and $x,y,z \in \A$;
  \item[(I)] ``\emph{Identity}'': $x \star 1 = x$ for all $x \in \A$, where $1 \in \A$ is the identity matrix.
\end{enumerate}
To these basic requirements we add two properties that, at first sight, do not fix much information about $x \star y$.
First, we require $x \star y$ to have the same trace as ordinary matrix multiplication $xy$:
\begin{enumerate}
  \item[(T)] ``\emph{Trace}'': $\tr(x \star y) = \tr(xy)$ for all $x,y \in \A$.
\end{enumerate}
Second, we require that $\star$ makes the same vectors orthogonal as ordinary matrix multiplication. Formulated algebraically:
\begin{enumerate}
  \item[(O)] ``\emph{Orthogonality}'': $x \star y = 0$ when $xy=yx=0$, $xx=x$, and $yy=y$, \\for $x,y \in \A$ that have rank one.
\end{enumerate}
These assumptions already imply that $x \star y$ must equal the ordinary matrix multiplication, or its opposite, after all. That is, we will prove the following theorem, for any scalar field $\F$ of characteristic not two.

\begin{theorem}\label{thm:nogo}
  A function $\star \colon \A \times \A \to \A$ satisfies (A), (B), (I), (T) and (O) if and only if either $x \star y=xy$ for all $x,y \in \A$, or $x \star y = yx$ for all $x,y \in \A$.    
\end{theorem}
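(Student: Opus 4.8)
The plan is to translate the five axioms into the statement that $(\A,\star)$ is a unital associative algebra carrying a well-behaved bilinear form, and then to read the entire multiplication table off from the orthogonality axiom. Throughout write $\langle x,y\rangle=\tr(xy)$ for the ordinary trace pairing, which is symmetric, nondegenerate, and associative. First I would show that (A),(B),(I),(T) make $(\A,\star)$ a symmetric Frobenius algebra whose Frobenius pairing is $\langle\cdot,\cdot\rangle$: applying (T) to the two sides of (A) gives $\langle x\star y,z\rangle=\tr((x\star y)z)=\tr(x(y\star z))=\langle x,y\star z\rangle$, so $\langle\cdot,\cdot\rangle$ is $\star$-associative; taking $x=1$ and using (T) once more gives $\langle 1\star y,z\rangle=\langle y,z\rangle$ for all $z$, hence $1\star y=y$ by nondegeneracy, so with (I),(B) the product $\star$ is unital and associative. (The reverse implication of the theorem is the routine check that ordinary multiplication and its opposite $(x,y)\mapsto yx$ satisfy all five axioms; note that whenever $\star$ does, so does $\star^{\mathrm{op}}\colon(x,y)\mapsto y\star x$.)

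Next I would feed rank-one idempotents into (O). Since $e_{ii}e_{jj}=e_{jj}e_{ii}=0$ for $i\ne j$, axiom (O) gives $e_{ii}\star e_{jj}=0$, and then (I) forces $e_{ii}\star e_{ii}=e_{ii}$ with $\sum_i e_{ii}=1$: the $e_{ii}$ form a complete orthogonal system of idempotents in $(\A,\star)$. Applying (O) to the further orthogonal pairs $e_i(e_i+e_j)\T$ and $e_{kk}$ (for $k\notin\{i,j\}$), to the one-parameter families $e_i(e_i+te_j)\T$ and $e_{jj}-te_{ij}$, and to their transposes, and using that a degree-one polynomial in $t$ vanishing for all $t\ne0$ is zero (a field of characteristic $\ne2$ has at least three elements), I would extract, for $i\ne j$: $e_{ii}\star e_{kl}=e_{kl}\star e_{ii}=0$ whenever $i\notin\{k,l\}$; $e_{ij}\star e_{ij}=0$; and $A_{ij}:=e_{ii}\star e_{ij}=e_{ij}\star e_{jj}$, $B_{ij}:=e_{jj}\star e_{ij}=e_{ij}\star e_{ii}$, with $A_{ij}+B_{ij}=e_{ij}$.

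Then I would analyse the Peirce decomposition $\A=\bigoplus_{i,j}M_{ij}$, $M_{ij}=e_{ii}\star\A\star e_{jj}$. The relations above give, by direct computation, $M_{ii}=\F e_{ii}$ and $M_{ij}=\mathrm{span}(A_{ij},B_{ji})$ for $i\ne j$, so $\dim M_{ij}\le2$; since $A_{ij}\in M_{ij}$, $B_{ij}\in M_{ji}$ and $A_{ij}+B_{ij}=e_{ij}\ne0$, at least one of $M_{ij},M_{ji}$ is nonzero for each pair. For a symmetric Frobenius algebra the pairing $\langle\cdot,\cdot\rangle$ annihilates $M_{ij}$ against every Peirce piece except $M_{ji}$ and pairs it perfectly with $M_{ji}$, so $\dim M_{ij}=\dim M_{ji}$; both are therefore nonzero, and the count $\sum_{i\ne j}\dim M_{ij}=n^2-n$ forces all $\dim M_{ij}=1$. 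Hence $M_{ij}\oplus M_{ji}=\mathrm{span}(e_{ij},e_{ji})$, a hyperbolic plane, in which $M_{ij}$ is an isotropic line; therefore $M_{ij}$ is $\F e_{ij}$ or $\F e_{ji}$ --- this is the one place characteristic $\ne2$ is essential. Call $\{i,j\}$ \emph{positive} in the first case, \emph{negative} in the second; unravelling the definitions, $\{i,j\}$ is positive precisely when $\star$ agrees with ordinary multiplication on $\{e_{ii},e_{jj},e_{ij},e_{ji}\}$, negative precisely when it agrees with the opposite multiplication there (the value $e_{ij}\star e_{ji}\in\F e_{ii}$ being pinned to $e_{ii}$ by (T)).

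Finally I would show that all pairs carry the same sign, and that the remaining structure constants are correct. Replacing $\star$ by $\star^{\mathrm{op}}$ if necessary, assume $\{1,2\}$ is positive. For distinct $i,j,k$, applying (O) to $e_i(e_i+e_j+e_k)\T$ and $(e_j-e_k)e_j\T$ and expanding in Peirce components shows that "$\{i,j\}$ positive and $\{j,k\}$ negative" collapses to $e_{ij}=c\cdot(\text{generator of }M_{ik})$ for a scalar $c$, which is impossible since that generator is $e_{ik}$ or $e_{ki}$; hence any pair adjacent to a positive one is positive, and (for $n\ge3$; the cases $n\le2$ have no such triples and are already finished) all pairs are positive. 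A last application of (O), to $e_i(e_i+e_j+e_k)\T$ and $(e_k-e_j)e_k\T$, forces $e_{ij}\star e_{jk}=e_{ik}$, and combined with the positive-pair relations and the vanishing $M_{ij}\star M_{kl}=0$ for $j\ne k$ this yields $e_{ab}\star e_{cd}=e_{ab}e_{cd}$ for all matrix units, so $x\star y=xy$ for all $x,y$ by bilinearity; the all-negative case gives $x\star y=yx$. I expect this last step to be the main obstacle: the data assembled in the earlier steps (a sign per pair together with the diagonal and $2\times2$ relations) is consistent with a whole family of "twisted" associative bilinear products obeying (A),(B),(I),(T), and it is only (O) --- applied to non-semisimple rank-one idempotents supported on three coordinates --- that kills the twist and synchronises the signs. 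The other somewhat delicate point is the use of the Frobenius pairing to force $\dim M_{ij}=1$, ruling out a pair splitting as $\dim M_{ij}=0$, $\dim M_{ji}=2$.
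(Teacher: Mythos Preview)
Your argument is correct and follows a genuinely different route from the paper. The paper quotes the zero-product--preserving theorem of Alaminos--Bre\v{s}ar--Extremera--Villena to obtain $x\star y = xy + g(xy-yx)$ for some linear $g$, and then determines $g$ entry by entry via a long case analysis of the associativity relation on matrix units, ending with $g(x)=\lambda x+\tr(x)z$ for $\lambda\in\{0,-1\}$. You instead work intrinsically: (A) and (T) make the trace pairing $\star$-associative, so $(\A,\star)$ is a symmetric Frobenius algebra; (O) on diagonal units yields a complete orthogonal idempotent system; Frobenius duality on the resulting Peirce decomposition forces each off-diagonal $M_{ij}$ to be a one-dimensional isotropic line in the hyperbolic plane $\mathrm{span}(e_{ij},e_{ji})$, hence equal to $\F e_{ij}$ or $\F e_{ji}$; and hand-picked rank-one idempotents supported on three coordinates synchronise the signs and pin down $e_{ij}\star e_{jk}=e_{ik}$. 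Your proof is self-contained --- no external black box --- and conceptually cleaner; the paper's approach is more mechanical but handles all three-index constraints uniformly through a single associativity formula rather than bespoke idempotents. One small quibble: characteristic $\ne2$ is used twice, not once --- first to guarantee $|\F|\ge3$ in the polynomial-in-$t$ argument, and then essentially for the isotropic-line dichotomy in the hyperbolic plane.
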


This fits in the recent programme of results surrounding linear preserver problems, \textit{i.e.}~linear maps that preserve zero products, commutativity, etc.~\cite{bresarsemrl:commutativity,molnar:preservers,alaminosbresarextremeravillena:idempotents,wanglige:idempotent}. Indeed, we will rely on one of those results~\cite{alaminosbresarextremeravillena:idempotents}.
Our original motivation came from quantum theory, where the above properties arise as desiderata for a possible noncommutative extension of Bayesian inference~\cite{leiferspekkens:bayesian,barrettetal:nogo}. 


\section{The main result}

\begin{proposition}\label{prop:idempotents}
  A map $\star \colon \A \times \A \to \A$ meets (B), (I), and (O) if and only if 
  \begin{equation}\label{eq:star}
    x \star y = xy + g(xy-yx)
  \end{equation}
  for some linear map $g \colon \A \to \A$ and all $x,y \in \A$. 
\end{proposition}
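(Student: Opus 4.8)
The plan is to prove the two implications separately. The ``if'' direction is an immediate verification: if $x \star y = xy + g(xy - yx)$ for some linear map $g$, then $\star$ is bilinear since $(x,y)\mapsto xy$ and $(x,y)\mapsto xy-yx$ are bilinear and $g$ is linear; it obeys (I) because $x \star 1 = x + g(x-x) = x$; and it obeys (O) because $xy=yx=0$ already forces $x\star y = 0 + g(0) = 0$, so the rank and idempotency hypotheses in (O) are not even used in this direction.

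For the converse, suppose $\star$ satisfies (B), (I) and (O), and put $\Phi(x,y) := x\star y - xy$. Then $\Phi$ is bilinear by (B), and (I) gives $\Phi(x,1) = x\star 1 - x = 0$ for all $x$. If $p,q\in\A$ are rank-one idempotents with $pq=qp=0$ then in particular $pq=0$, so (O) gives $\Phi(p,q) = p\star q - pq = 0$; thus $\Phi$ annihilates every pair of orthogonal rank-one idempotents, and hence --- since a pair of orthogonal idempotents can be simultaneously conjugated to diagonal idempotents with disjoint supports and so decomposes into pairwise orthogonal rank-one idempotents --- $\Phi$ annihilates every pair of orthogonal idempotents. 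I would then invoke the structure result of \cite{alaminosbresarextremeravillena:idempotents}: a bilinear map on $\A$ that vanishes on all pairs of orthogonal idempotents must have the form $\Phi(x,y) = T_1(xy) + T_2(yx)$ for some linear maps $T_1,T_2\colon\A\to\A$ (applied entrywise if that result is stated only for scalar-valued forms). Setting $y=1$ gives $0 = \Phi(x,1) = T_1(x) + T_2(x)$ for all $x$, so $T_2=-T_1$ and therefore $\Phi(x,y) = T_1(xy-yx)$; with $g:=T_1$ this is exactly \eqref{eq:star}.

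The one nontrivial external ingredient, and the step I expect to be the main obstacle, is the classification quoted from \cite{alaminosbresarextremeravillena:idempotents}; everything surrounding it is bookkeeping, so the proof hinges on pinning down and transcribing that result in precisely the form above. Should it not be available off the shelf, the fallback is to derive the needed statement by hand: one first checks that $\Phi(p,p)=0$ for every rank-one idempotent $p$ --- write $p = 1-\sum_j p_j$ with the $p_j$ pairwise orthogonal rank-one idempotents also orthogonal to $p$, and expand using $\Phi(p,1)=0$ --- and then determines $\Phi$ on the span of $(x,y)\mapsto xy$ and $(x,y)\mapsto yx$ by evaluating it on conjugates $SE_{11}S^{-1}$, $SE_{22}S^{-1}$ of the standard matrix units for invertible $S$.
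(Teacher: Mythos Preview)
Your argument is correct and follows essentially the same route as the paper: both invoke \cite[Theorem~2.2]{alaminosbresarextremeravillena:idempotents} to write the map in the form $f(xy)+g(yx)$ and then use (I) to eliminate one of the two linear maps. The only cosmetic difference is that you apply the cited result to $\Phi(x,y)=x\star y-xy$ rather than to $\star$ directly, and you spell out the passage from rank-one to arbitrary orthogonal idempotents; the paper skips both of these bookkeeping points.
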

\begin{proof}
  Assume (B), (I) and (O). Then \cite[Theorem~2.2]{alaminosbresarextremeravillena:idempotents} applies, giving linear maps $f,g \colon \A \to \A$ with $x \star y = f(xy)+g(yx)$. Furthermore, $x = x \star 1 = f(x 1) + g(1 x)$, and so $f(x)=x-g(x)$. Therefore $x \star y = f(xy)+g(yx) = xy+g(yx-xy)$. Taking the negative of $g$ now gives~\eqref{eq:star}.

  If $\star$ is of the form~\eqref{eq:star}, then it is easy to show that (B), (I), and (O) hold.
\end{proof}

\begin{lemma}\label{lem:trace}
  Suppose $\star$ satisfies (B), (I), and (O).
  Then (T) holds if and only if $g$ sends traceless matrices to traceless matrices.
\end{lemma}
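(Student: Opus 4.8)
The plan is to substitute the explicit form of $\star$ supplied by Proposition~\ref{prop:idempotents} into condition (T) and read off the resulting constraint on $g$. Writing $x \star y = xy + g(xy-yx)$ and taking traces yields $\tr(x \star y) = \tr(xy) + \tr\bigl(g(xy-yx)\bigr)$, so (T) holds precisely when $\tr\bigl(g(xy-yx)\bigr) = 0$ for all $x,y \in \A$. Abbreviating the composite linear functional $\tr \circ g \colon \A \to \F$ as $\phi$, condition (T) is thus equivalent to $\phi$ vanishing on every additive commutator $xy - yx$.

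The second step is to upgrade ``every commutator'' to ``every traceless matrix''. One inclusion is immediate, since $\tr(xy-yx) = 0$ always. For the converse I would recall the elementary computation with matrix units: $e_{ij}e_{jj} - e_{jj}e_{ij} = e_{ij}$ for $i \neq j$, and $e_{ij}e_{ji} - e_{ji}e_{ij} = e_{ii} - e_{jj}$. The matrices $e_{ij}$ with $i \neq j$ together with $e_{11}-e_{22}, \dots, e_{n-1,n-1}-e_{nn}$ form a basis of the space of traceless matrices over any field, so the additive commutators span exactly that space. Because $\phi$ is linear, it annihilates all commutators if and only if it annihilates their span, that is, if and only if $\tr\bigl(g(t)\bigr) = 0$ for every traceless $t$ --- which says exactly that $g$ maps traceless matrices to traceless matrices. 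Chaining this with the first equivalence proves the lemma.

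I do not expect a genuine obstacle: beyond Proposition~\ref{prop:idempotents} and the linearity of $\tr$ and $g$, the only input is the standard description of the commutator subspace of $\A$, and the argument above makes no use of the characteristic, so the running hypothesis $\operatorname{char}\F \neq 2$ is irrelevant here. The one point to state carefully is that ``traceless'' must mean ``annihilated by $\tr$'' even when $\operatorname{char}\F$ divides $n$, in which case $1$ itself is traceless; the basis exhibited above remains a basis in that situation, so nothing changes. (One could instead invoke the stronger fact that over any field every traceless matrix is itself a single commutator, but spanning is all that is needed.)
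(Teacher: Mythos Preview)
Your argument is correct and follows the same route as the paper: reduce (T) to $\tr\bigl(g(xy-yx)\bigr)=0$ via Proposition~\ref{prop:idempotents}, then identify the commutators with the traceless matrices. The only difference is cosmetic---the paper cites the Albert--Muckenhoupt theorem that every traceless matrix is a single commutator, whereas you give the elementary matrix-unit computation showing commutators span the traceless subspace, which (as you note) is all that is needed.
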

\begin{proof}
  Property~(T) is equivalent to $\tr(g(xy-yx))=0$ for all $x,y \in \A$. 
  But matrices of the form $xy-yx$ are precisely those with trace zero~\cite{albertmuckenhoupt:tracezero}.
\end{proof}

Write $e_{ij}$ for the standard matrix units, 
so that
$e_{ij}e_{kl} = \delta_{jk} e_{il}$.
Because of bilinearity, property~(A) holds precisely when it holds for $x=e_{ab}$, $y=e_{cd}$, $z=e_{ef}$ for all $a,b,c,d,e,f \in \{1,\ldots,n\}$.
Via Proposition~\ref{prop:idempotents}, property~(A) comes down to
\begin{align}\label{eq:magicformula}
  \begin{split}
  & -\delta_{de}\delta_{af} g(e_{cb}) + \delta_{de} e_{ab} \star g(e_{cf}) - \delta_{cf} e_{ab} \star g(e_{ed}) \\
  =&
-\delta_{af}\delta_{bc} g(e_{ed}) + \delta_{bc} g(e_{ad}) \star e_{ef} - \delta_{ad} g(e_{cb}) \star e_{ef}
  \end{split}
\end{align}
for all $a,b,c,d,e,f \in \{1,\ldots,n\}$. We will use this formula very often below. 

By linearity of $g$, we may write $g(e_{ij}) = \sum_{k,l=1}^n G_{kl,ij} e_{lk}$ for entries $G_{kl,ij} \in \F$. By convention, we will write $g(ij)_{kl}$ for $G_{kl,ij}$, and $g(ii-jj)_{kl}$ for $G_{kl,ii}-G_{kl,jj}$.


\begin{lemma}\label{lem:cases}
  For distinct $i,j,k,l \in \{1,\ldots,n\}$:
  \begin{align}
    g(ij)_{kl} = 0, \label{eq:case1} \\
    g(ii-jj)_{kl} = 0. \label{eq:case2}
  \end{align}
  For distinct $i,j,k \in \{1,\ldots,n\}$:
  \begin{align}
    g(ij)_{jk} = 0, \label{eq:case3} \\
    g(ij)_{kj} = 0, \label{eq:case4} \\
    g(ij)_{ik} = 0, \label{eq:case5} \\
    g(ij)_{ki} = 0, \label{eq:case6} \\
    g(ij)_{kk} = 0, \label{eq:case7} \\
    g(ii-jj)_{kk} = 0, \label{eq:case8} \\
    g(ii-jj)_{ik} = 0, \label{eq:case9} \\
    g(ii-jj)_{ki} = 0, \label{eq:case10} \\
    g(ii-jj)_{kj} = 0, \label{eq:case11} \\
    g(ii-jj)_{jk} = 0. \label{eq:case12} 
  \end{align}
  For distinct $i,j \in \{1,\ldots,n\}$:
  \begin{align}
    g(ij)_{jj} = 0, \label{eq:case13} \\
    g(ij)_{ii} = 0, \label{eq:case14} \\
    g(ij)_{ij} = 0, \label{eq:case15} \\
    g(ii-jj)_{ij} = 0, \label{eq:case16} \\
    g(ii-jj)_{ji} = 0. \label{eq:case17}
  \end{align}
\end{lemma}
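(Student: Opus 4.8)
The plan is to extract all seventeen identities by specialising the ``magic formula'' \eqref{eq:magicformula} at carefully chosen index values and comparing coefficients of matrix units on the two sides. Whenever a term $e_{ab}\star g(e_{cd})$ or $g(e_{cd})\star e_{ef}$ survives a specialisation, I would expand it using \eqref{eq:star}: writing $g(e_{cd})=\sum_{k,l}g(cd)_{kl}\,e_{lk}$ and $e_{ab}\star e_{lk}=\delta_{bl}e_{ak}+g(\delta_{bl}e_{ak}-\delta_{ka}e_{lb})$, the formula \eqref{eq:magicformula} becomes a linear identity among the scalars $g(\cdot)_{\cdot\cdot}$ once one reads off the coefficient of a suitably chosen unit $e_{pq}$. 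The art is to pick the six indices $a,\dots,f$ so that as many Kronecker deltas as possible vanish and so that this comparison isolates a single unknown.

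For the ``fully generic'' statements \eqref{eq:case1} and \eqref{eq:case2} I would take the indices pairwise distinct, which is legitimate since those cases already presuppose four distinct indices. For example, the choice $e=d$, $f=a$ with $a,b,c,d$ distinct collapses \eqref{eq:magicformula} to $e_{ab}\star g(e_{ca})=g(e_{cb})$; expanding the left-hand side as above and matching the coefficient of an appropriate $e_{pq}$ forces the entries of $g(e_{ca})$ lying outside the support of $e_{ca}$, hence those of a generic matrix unit, to vanish, which is \eqref{eq:case1}. The diagonal statement \eqref{eq:case2} follows from the same device applied with $e_{ab}$ replaced by $e_{ii}$ versus $e_{jj}$, subtracting the two resulting identities.

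The remaining cases \eqref{eq:case3}--\eqref{eq:case17} go through in the same way, except that I would deliberately introduce one or two coincidences among $a,\dots,f$ (say $b=c$, so that $e_{ab}e_{cd}=e_{ad}\neq 0$, or $a=d$, or $c=f$), which brings back a bounded number of delta terms; one then solves the resulting small linear system for the target scalar, substituting the vanishing relations proved earlier in the list. The ordering \eqref{eq:case1}--\eqref{eq:case17} is precisely what makes this work: each identity is arranged to invoke only its predecessors, so the whole lemma is discharged by a single downward pass.

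I expect the main obstacle to be organisational rather than conceptual. One must (i) track exactly which deltas survive each specialisation, taking care that no two free indices accidentally coincide -- which needs $n$ large enough, with small $n$ (few available indices) handled by a more restricted specialisation or seen to be vacuous; (ii) arrange the specialisations so that the induced recursion among the scalars $g(ij)_{kl}$ terminates in ``$=0$'' rather than merely permuting unknowns, which is the reason the cases must be treated in order; and (iii) watch for occasional factors of $2$ -- when a unit appears on both sides, or $g(ii-jj)$ appears doubled -- where the hypothesis that $\F$ has characteristic not two is used. No single step is difficult; the effort is in carrying the bookkeeping through all seventeen cases cleanly.
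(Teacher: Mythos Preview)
Your high-level plan --- specialise the identity \eqref{eq:magicformula} at well-chosen indices and read off scalar equations --- is exactly the paper's approach, and your index choice $e=d$, $f=a$ is even one of the paper's specialisations. But there is a genuine gap in how you propose to extract the scalars. When a term such as $e_{ab}\star g(e_{cf})$ survives and you expand it via \eqref{eq:star}, you get
\[
  e_{ab}\star g(e_{cf}) \;=\; e_{ab}\,g(e_{cf}) \;+\; g\bigl(e_{ab}\,g(e_{cf}) - g(e_{cf})\,e_{ab}\bigr),
\]
and the second summand is $g$ applied to a matrix that already contains the unknown entries of $g$. Written in components this is a \emph{quadratic} expression in the scalars $g(\cdot)_{\cdot\cdot}$, not a linear one. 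So your claim that ``the formula \eqref{eq:magicformula} becomes a linear identity among the scalars'' is not correct, and the tidy downward pass you envisage will not materialise: at each step you will be facing a coupled bilinear system rather than a single unknown.

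The paper sidesteps this entirely by never expanding the $\star$-products. Instead, it chooses indices so that \eqref{eq:magicformula} collapses to an equation of the shape $e_{pq}\star g(e_{rs})=w$ (often with $w=0$), then takes the trace of both sides and invokes property~(T):
\[
  g(rs)_{pq} \;=\; \tr\bigl(e_{pq}\,g(e_{rs})\bigr) \;=\; \tr\bigl(e_{pq}\star g(e_{rs})\bigr) \;=\; \tr(w).
\]
When $w=0$ this gives the vanishing directly; when $w=g(e_{uv})$ with $u\neq v$, Lemma~\ref{lem:trace} yields $\tr(w)=0$ as well. Thus (T) is precisely the device that linearises the problem, and it is the ingredient missing from your proposal. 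A side effect is that the paper's argument for this lemma uses no division by $2$; the characteristic hypothesis enters only later, in Lemma~\ref{lem:distinction}, so your point~(iii) is not actually needed here.
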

\begin{proof}
  All these equations are derived following the same pattern. 
  Let $i,j,k,l$ be distinct. Taking $a=k$, $b=l$, $c=i$, and $d=e=f=j$ in~\eqref{eq:magicformula} results in $e_{kl} \star g(e_{ij}) = 0$. Taking the trace of both sides and using property~(T) now shows $g(ij)_{kl} = \tr(g(e_{ij})e_{kl}) = \tr(e_{kl} g(e_{ij})) = \tr(e_{kl} \star g(e_{ij})) = 0$, establishing~\eqref{eq:case1}.

  Similarly, taking $a=k$, $b=l$, $c=f=i$, and $d=e=j$ in~\eqref{eq:magicformula} results in $e_{kl} \star g(e_{ii}-e_{jj}) = 0$, from which~\eqref{eq:case2} follows by taking the trace of both sides.

  For~\eqref{eq:case3}--\eqref{eq:case17} we simply list the appropriate choice of indices:
  \begin{enumerate}
  	\item[\eqref{eq:case3}:] $c=d=e=i$, $a=f=j$, $b=k$;
  	\item[\eqref{eq:case4}:] $c=d=e=i$, $b=f=j$, $a=k$;
  	\item[\eqref{eq:case5}:] $a=c=i$, $d=e=f=j$, $b=k$;
  	\item[\eqref{eq:case6}:] $a=f=i$, $d=j$, $b=c=e=k$;
  	\item[\eqref{eq:case7}:] $c=d=e=i$, $f=j$, $a=b=k$;
  	\item[\eqref{eq:case8}:] $c=f=i$, $d=e=j$, $a=b=k$;
  	\item[\eqref{eq:case9}:] $a=c=f=i$, $d=e=j$, $b=k$;
  	\item[\eqref{eq:case10}:] $b=c=f=i$, $d=e=j$, $a=k$; use~\eqref{eq:case3};
  	\item[\eqref{eq:case11}:] $c=f=i$, $b=d=e=j$, $a=k$;
  	\item[\eqref{eq:case12}:] $c=f=i$, $a=d=e=j$, $b=k$; use~\eqref{eq:case6};
  	\item[\eqref{eq:case13}:] $c=d=e=i$, $a=b=f=j$;
  	\item[\eqref{eq:case14}:] $a=b=c=i$, $d=e=f=j$; use~\eqref{eq:case13};
  	\item[\eqref{eq:case15}:] $a=c=i$, $b=d=e=f=j$;
  	\item[\eqref{eq:case16}:] $a=c=f=i$, $b=d=e=j$;
  	\item[\eqref{eq:case17}:] $b=c=f=i$, $a=d=e=j$.\qedhere
  \end{enumerate}
\end{proof}

 \begin{lemma}\label{lem:distinction}
  There is $\lambda \in \{0,-1\}$ with $g(ii-jj)_{ii}=-g(ii-jj)_{jj}=g(ij)_{ji}=\lambda$ for distinct $i,j \in \{1,\ldots,n\}$.
  Moreover, there is $z \in \A$ with $g(ii)=\lambda e_{ii}+z$ for all $i$.
\end{lemma}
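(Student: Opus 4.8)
The plan is to feed two carefully chosen instances of associativity into the vanishing relations of Lemma~\ref{lem:cases}.

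First I would record the shape of $g$. By \eqref{eq:case1}, \eqref{eq:case3}--\eqref{eq:case7} and \eqref{eq:case13}--\eqref{eq:case15}, for distinct $i,j$ the only entry of $g(e_{ij})$ that can be nonzero is $g(ij)_{ji}$, so $g(e_{ij})=\nu_{ij}\,e_{ij}$ with $\nu_{ij}:=g(ij)_{ji}$; and by \eqref{eq:case2}, \eqref{eq:case8}--\eqref{eq:case12}, \eqref{eq:case16} and \eqref{eq:case17}, $g(e_{ii})-g(e_{jj})$ is diagonal and supported on rows $i,j$, with $g(ii-jj)_{ii}$ independent of $j$, so writing $\mu_i:=g(ii-jj)_{ii}$ we have $g(e_{ii})-g(e_{jj})=\mu_i e_{ii}-\mu_j e_{jj}$. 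Expanding $x\star y=xy+g(xy-yx)$ from Proposition~\ref{prop:idempotents} with these formulas, I would then compute, for distinct $i,j$: $e_{ij}\star e_{ji}=(1+\mu_i)e_{ii}-\mu_j e_{jj}$, $e_{ji}\star e_{ij}=(1+\mu_j)e_{jj}-\mu_i e_{ii}$, $e_{ii}\star e_{ij}=e_{ij}\star e_{jj}=(1+\nu_{ij})e_{ij}$, $e_{ij}\star e_{ii}=e_{jj}\star e_{ij}=-\nu_{ij}\,e_{ij}$, $e_{ii}\star e_{ii}=e_{ii}$, $e_{jj}\star e_{ii}=0$, and $e_{ji}\star e_{ii}=(1+\nu_{ji})e_{ji}$.

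Next I would exploit associativity twice. Applying (A) to $e_{ij},e_{ji},e_{ij}$ and equating the $e_{ij}$-coefficients of $(e_{ij}\star e_{ji})\star e_{ij}$ and $e_{ij}\star(e_{ji}\star e_{ij})$, the identity $(1+\mu_i)(1+\nu_{ij})+\mu_j\nu_{ij}=(1+\mu_j)(1+\nu_{ij})+\mu_i\nu_{ij}$ drops to $\mu_i=\mu_j$; hence all $\mu_i$ equal one scalar, which I call $\lambda$. Applying (A) to $e_{ij},e_{ji},e_{ii}$ instead, the left-hand side $(e_{ij}\star e_{ji})\star e_{ii}$ collapses to $(1+\lambda)e_{ii}$ (using $e_{jj}\star e_{ii}=0$ and $e_{ii}\star e_{ii}=e_{ii}$), while the right-hand side $e_{ij}\star(e_{ji}\star e_{ii})$ equals $(1+\nu_{ji})\big((1+\lambda)e_{ii}-\lambda e_{jj}\big)$; comparing the $e_{jj}$- and $e_{ii}$-coefficients yields the coupled relations $(1+\nu_{ij})\lambda=0$ and $(1+\lambda)\nu_{ij}=0$, valid for all distinct $i,j$.

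From here the conclusion is short. If $\lambda\notin\{0,-1\}$ then $\lambda\neq0$ makes the first relation give $\nu_{ij}=-1$ and $1+\lambda\neq0$ makes the second give $\nu_{ij}=0$, which is absurd; hence $\lambda\in\{0,-1\}$. When $\lambda=0$ the second relation gives $\nu_{ij}=0=\lambda$, and when $\lambda=-1$ the first gives $\nu_{ij}=-1=\lambda$, so in both cases $g(ij)_{ji}=\lambda$, together with $g(ii-jj)_{ii}=\mu_i=\lambda$ and $g(ii-jj)_{jj}=-\mu_j=-\lambda$; that is the first assertion. For the second, I would set $z:=g(e_{ii})-\lambda e_{ii}$, which by $g(e_{ii})-g(e_{jj})=\lambda e_{ii}-\lambda e_{jj}$ does not depend on $i$, giving $g(ii)=\lambda e_{ii}+z$ for all $i$. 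The bookkeeping of the first step is mechanical and the last step is immediate; the only genuine decision is which associativity instances to use, and I expect the main obstacle to be spotting that the single identity $(e_{ij}\star e_{ji})\star e_{ii}=e_{ij}\star(e_{ji}\star e_{ii})$ delivers both coupled relations at once, while $(e_{ij}\star e_{ji})\star e_{ij}=e_{ij}\star(e_{ji}\star e_{ij})$ delivers $\mu_i=\mu_j$.
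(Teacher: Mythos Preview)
Your argument is correct. It proceeds along a genuinely different path than the paper's.

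The paper works throughout with the pre-expanded associativity identity~\eqref{eq:magicformula}: it plugs in carefully chosen index tuples to obtain $g(ii-jj)_{ii}=g(ij)_{ji}=g(ji)_{ij}=-g(ii-jj)_{jj}$ directly, then writes $g(e_{ii})=\lambda_i e_{ii}+z_i$ and reads off $\lambda_i=\lambda_j$ and $z_i=z_j$ from the shape $g(e_{ii}-e_{jj})=\lambda_{ij}(e_{ii}-e_{jj})$; finally it extracts $\lambda\in\{0,-1\}$ from the identity $g(e_{ij})=e_{ij}\star g(e_{ii}-e_{jj})$, which yields $(1+2\lambda)\lambda=-\lambda$ and hence $2\lambda(\lambda+1)=0$, using $\mathrm{char}\,\F\neq 2$. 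You instead compute the relevant $\star$-products explicitly from $x\star y=xy+g(xy-yx)$ together with the shape of $g$ coming from Lemma~\ref{lem:cases}, and then test associativity on the two triples $(e_{ij},e_{ji},e_{ij})$ and $(e_{ij},e_{ji},e_{ii})$. The first triple gives $\mu_i=\mu_j$ directly, and the second gives the coupled pair $(1+\nu)\lambda=0$, $(1+\lambda)\nu=0$, from which both $\lambda\in\{0,-1\}$ and $\nu_{ij}=\lambda$ follow in one stroke, \emph{without} invoking $\mathrm{char}\,\F\neq 2$. Your definition of $z$ at the end is also cleaner than the paper's linear-independence decomposition. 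One cosmetic slip: your second associativity test literally produces $(1+\nu_{ji})\lambda=0$ and $(1+\lambda)\nu_{ji}=0$; you silently swap $i\leftrightarrow j$ when recording them with $\nu_{ij}$, which is of course harmless since the relations hold for all distinct pairs.
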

\begin{proof}
  For any $k,l \in \{1,\ldots,n\}$, the entry $g(ij)_{kl}$ can only be nonzero when $k=j$ and $l=i$ by~\eqref{eq:case1}, \eqref{eq:case3}--\eqref{eq:case7} and~\eqref{eq:case13}--\eqref{eq:case15}. Let $\lambda_{ij} \in \F$ be that entry: $g(ij)=\lambda_{ij}e_{ij}$.

  Taking $b=c=e=f=i$ and $a=d=j$ in~\eqref{eq:magicformula} leads to $g(ii-jj)_{ii} = g(ij)_{ji}$.
  Similarly, $a=d=i$ and $b=c=e=f=j$ lead to $g(jj-ii)_{jj} = g(ji)_{ij}$.
  Next, $b=c=d=e=i$ and $a=f=j$ show that $g(ij)_{ji} = g(ji)_{ij}$.
  Therefore 
  \[
    g(ii-jj)_{ii}=g(ij)_{ji}=\lambda_{ij},
    \qquad
    g(ii-jj)_{jj}=-g(ij)_{ji}=-\lambda_{ij}.
  \]
  Combining this with~\eqref{eq:case2}, \eqref{eq:case8}--\eqref{eq:case12}, \eqref{eq:case16} and~\eqref{eq:case17} shows $g(ii-jj) = \lambda_{ij}(e_{ii}-e_{jj})$.

  We may write $g(ii)=\lambda_i e_{ii} +z_i$ for $\lambda_i \in \F$, and $z_i \in \A$ linearly independent from $e_{ii}$. 
  Then $\lambda_{ij}(e_{ii}-e_{jj}) = g(ii-jj) = \lambda_i e_{ii} - \lambda_j e_{jj} + (z_i-z_j)$. 
  It follows that $z_i$ does not depend on $i$, and we simply write $z$ instead.
  Similarly, $\lambda_{ij}=\lambda_i=\lambda_j$ does not depend on $i$ or $j$, and we may simply write $\lambda$ for $\lambda_{ij}$. Hence $g(ii)=\lambda e_{ii}+z$.

  It follows from the choice of indices for~\eqref{eq:case16} that $g(ij) = e_{ij} \star g(ii-jj)$, and so
  \begin{align*}
    g(ij) 
    = \lambda e_{ij} \star (e_{ii}-e_{jj}) 
    = \lambda ( -g(ij) -e_{ij} - g(ij)) 
    = -2\lambda g(ij) - \lambda e_{ij}.
  \end{align*}
  Hence $(1+2\lambda) g(ij) = -\lambda e_{ij}$. But $g(ij)=\lambda e_{ij}$ by definition, so $\lambda \in \{0,-1\}$.
\end{proof}

\begin{proof}[Proof of Theorem~\ref{thm:nogo}]
  Lemma~\ref{lem:distinction} gives $\lambda \in \{0,-1\}$ and $z \in \A$ with $g(ii) = \lambda e_{ii} + z$. 
  Let $x \in \A$, say $x=\sum_{i,j=1}^n \chi_{ij} e_{ij}$ for $\chi_{ij} \in \F$.
  It follows from Lemma~\ref{lem:cases} that
  \begin{align*}
    g(x)
     = \sum_{i,j=1}^n \chi_{ij} g(ij) 
    & = \sum_{i=1}^n \chi_{ii} g(ii) + \sum_{i \neq j} \chi_{ij} g(ij) \\
    & = \sum_{i=1}^n (\lambda \chi_{ii} e_{ii} + \chi_{ii} z) + \sum_{i \neq j} \lambda \chi_{ij} e_{ij} \\
    & = \lambda \sum_{i,j=1}^n \chi_{ij} e_{ij} + \sum_{i=1}^n \chi_{ii} z \\
    & = \lambda x + \tr(x)z.
  \end{align*}
  Therefore $x \star y = xy + \lambda(xy-yx)$. 
\end{proof}

\begin{remark}\label{rem:characteristictwo}
  The restriction of the base field $\F$ to characteristic not two is necessary. For example, if $\F=\Z_2$, then
  \begin{equation}\label{eq:characteristictwo}
    \begin{pmatrix} \alpha & \beta \\ \gamma & \varepsilon \end{pmatrix}
    \star
    \begin{pmatrix} \zeta & \eta \\ \theta & \iota \end{pmatrix}
    = 
    \begin{pmatrix} 
      \alpha \zeta + \gamma \eta & \beta \zeta + \varepsilon \eta \\
      \alpha \theta + \gamma \iota & \beta \theta + \varepsilon \iota
    \end{pmatrix}
  \end{equation}
  satisfies (A), (B), (I), (O), and (T), but differs from matrix multiplication or its opposite.
  In fact, apart from $g=0,1$, this is the only one of the form~\eqref{eq:star}, for \textit{e.g.}
  \[
    g \begin{pmatrix} \alpha & \beta \\ \gamma & \varepsilon \end{pmatrix}
    = \begin{pmatrix} \alpha & \beta \\ \gamma & \alpha \end{pmatrix}.
  \]
\end{remark}

\section*{Acknowledgements}

The physical considerations leading to Theorem~\ref{thm:nogo} were originally raised by Jonathan Barrett and Matthew Pusey, and are discussed in~\cite{barrettetal:nogo}. Chris Heunen was supported by the Office of Naval Research under grant N000141010357. Clare Horsman was supported by the CHIST-ERA DIQIP project, and by the FQXi Large Grant ``Time and the Structure of Quantum Theory''.

\bibliographystyle{plain}
\bibliography{matrixmultiplication}

\end{document}